\newtheorem{definition}{Definition}
\newtheorem{theorem}{Theorem}
\newtheorem{proposition}{Proposition}
\newtheorem{lemma}{Lemma}
\newtheorem{remark}{Remark}
\title{Learning from Survey Training Samples:\\
Rate Bounds for Horvitz-Thompson Risk Minimizers}
\author{
Stephan Cl\'emen\c{c}on\thanks{LTCI, T\'el\'ecom ParisTech, Universit\'e Paris-Saclay, 75013, Paris, France. \texttt{\{stephan.clemencon,guillaume.papa\}@telecom-paristech.fr}}~, 
Patrice Bertail\thanks{Universit\'e Paris Nanterre,
MODALX, Nanterre, France. \texttt{patrice.bertail@u-paris10.fr}}~, 
Guillaume Papa\footnotemark[1]
}
\begin{document}
\maketitle

\begin{abstract}
 The generalization ability of minimizers of the empirical risk in the context of binary classification has been investigated under a wide variety of complexity assumptions for the collection of classifiers over which optimization is performed. In contrast, the vast majority of the works dedicated to this issue stipulate that the training dataset used to compute the empirical risk functional is composed of i.i.d. observations and involve sharp control of uniform deviation of i.i.d. averages from their expectation. Beyond the cases where training data are drawn uniformly without replacement among a large i.i.d. sample or modelled as a realization of a weakly dependent sequence of r.v.'s, statistical guarantees when the data used to train a classifier are drawn by means of a more general sampling/survey scheme and exhibit a complex dependence structure have not been documented in the literature yet. It is the main purpose of this paper to show that the theory of empirical risk minimization can
 be extended to situations where statistical learning is based on survey samples and knowledge of the related (first order) inclusion probabilities. Precisely, we prove that minimizing a (possibly biased) weighted version of the empirical risk, refered to as the (approximate) Horvitz-Thompson risk (HT risk), over a class of controlled complexity lead to a rate for the excess risk of the order $O_{\mathbb{P}}((\kappa_N (\log N)/n)^{1/2})$ with $\kappa_N=(n/N)/\min_{i\leq N}\pi_i$, when data are sampled by means of a rejective scheme of (deterministic) size $n$ within a statistical population of cardinality $N\geq n$, a generalization of basic {\it sampling without replacement} with unequal probability weights $\pi_i>0$. Extension to other sampling schemes are then established by a coupling argument. Beyond theoretical results, numerical experiments are displayed in order to show the relevance of HT risk minimization and that ignoring the sampling scheme used to generate the training dataset may completely jeopardize the learning procedure.
\end{abstract}

\section{Introduction}
\label{sec:intro}
 Whereas statistical learning techniques crucially exploit data that can serve as examples to train a decision rule, they may also make use of weights individually assigned to the observations,
resulting from survey sampling stratification. Such weights could correspond either
to true inclusion probabilities or else to calibrated or post-stratification
weights, minimizing some discrepancy under certain margin constraints for the
inclusion probabilities. In the context of statistical inference based on survey data, the asymptotic properties of specific statistics such as Horvitz-Thompson estimators (\textit{cf} \cite{HT51}), whose computation involves not only the observations but also the weights, have been widely  investigated: in particular, mean estimation and
regression have been the subject of much attention, refer to
\cite{Hajek64}, %\cite{Ros72},
 \cite{Rob82}, % \cite{DS92},
  \cite{Ber98} for
instance, and a
comprehensive functional limit theory for distribution function estimation is progressively documented in the statistical literature,
see %\cite{Wellner88},
 \cite{Breslow06}, 
 %\cite{Breslow08},
\cite{Breslow09}, \cite{Wellner11}. 
At the same time, the last decades have witnessed a rapid development of the field of machine-learning.
Revitalized by different breakout algorithms (\textit{e.g.} SVM,
boosting methods), its practice is now supported by a sound probabilistic theory based on
recent non asymptotic results in the study of empirical processes, see
%\cite{DGL96},
 \cite{Kolt06}, \cite{BBL05}. However, most papers dedicated to theoretical results grounding the \textit{Empirical Risk Minimization} approach (ERM in short), the main paradigm of statistical learning, assume that the training of a decision rule is based on a dataset formed of independent replications of a generic random vector $Z$, a collection of $N\geq 1$ i.i.d. observations $Z_1,\; \ldots,\; Z_N$ namely. In contrast, few results are available in situations where the training dataset is generated by a more complex sampling scheme. One may refer to \cite{BardenetMaillard} for concentration inequalities permitting to study the generalization ability of empirical risk minimizers when the training data are obtained by standard \textit{sampling without replacement} (SWOR in abbreviated form) or to \cite{Steinwart09} in the case where the decision rule is learnt from a path of a \textit{weakly dependent stochastic} process.

It is the goal of this paper to extend the ERM theory to situations where the training dataset is generated by means of a more general sampling scheme, with possibly unequal probability weights. We first consider the case of \textit{rejective} sampling (sometimes refered to as \textit{conditional Poisson} sampling), an important generalization of basic SWOR. The rate bound results obtained by means of properties of so-termed \textit{negatively associated random variables} in this case are next shown to extend to a class of more general sampling schemes by a coupling argument. In addition, numerical experiments have been carried out in order to provide empirical evidence of the approach developed. They show in particular that statistical accuracy of the ERM approach may go down the drain if the sampling scheme underlying the training dataset is ignored.

The paper is organized as follows. In section \ref{sec:background}, the probabilistic framework of the present study is described at length and basic results of the probabilistic theory of classification are briefly recalled, together with some important notions of survey theory. The main theoretical results are stated in section \ref{sec:main}, while illustrative numerical experiments are presented in section \ref{sec:num}. Certain proofs are sketched in the Appendix section, whereas additional technical details are deferred to the Supplementary Material.

\section{Background and Preliminaries}\label{sec:background}

As a first go, we start with recalling key concepts pertaining to the theory of empirical risk minimization in binary classification, the flagship problem in statistical learning. A few notions related to survey theory are next described, which will be involved in the subsequent analysis.
 Throughout the article, the indicator function of any event
$\mathcal{E}$ is denoted by $\mathbb{I}\{\mathcal{E}\}$, the Dirac mass at any
point $a$ by $\delta_{a}$, the power set of any set $E$ by
$\mathcal{P}(E)$, the cardinality of any finite set $A$ by $\#A$.

\subsection{Binary Classification - Empirical Risk Minimization Theory}

The binary classification problem is considered as a running example all along
the paper. Because it can be easily formulated, it is undeniably the most documented
statistical learning problem in the literature and certain results extend to
more general frameworks (\textit{e.g.} multiclass classification, regression,
ranking). Let $(\Omega, \mathcal{A},\mathbb{P})$ be a probability space and $(X,Y)$ a random pair defined on $(\Omega,
\mathcal{A},\mathbb{P})$, taking its values in some measurable product space
$\mathcal{X}\times\{-1,+1\}$, with common distribution $P(dx, dy)$: the r.v. $X$ models some observation, hopefully useful for predicting the binary label $Y$. The distribution
$P$ can also be described by the pair $(F, \eta)$ where $F(dx)$ denotes the
marginal distribution of the input variable $X$ and $\eta(x)=\mathbb{P}%
\{Y=+1\mid X=x \}$, $x\in\mathcal{X}$, is the \textit{posterior distribution}. The objective is to build, based on the training dataset at disposal, a
measurable mapping $g:\mathcal{X} \mapsto\{-1,+1\}$, called a
\textit{classifier}, with minimum risk:
\begin{equation}
\label{eq:risk}L(g)\overset{def}{=}\mathbb{P}\{g(X)\neq Y\}.
\end{equation}
It is well-known folklore in the probabilistic theory of pattern recognition
that the \textit{Bayes classifier} $g^{*}(x)=2\mathbb{I}\{\eta(x)\geq 1/2\}-1$ is
a solution of the risk minimization problem $\inf_{g} L(g)$, where the infimum
is taken over the collection of all classifiers defined on the input space
$\mathcal{X}$. The minimum risk is denoted by $L^{*}=L(g^{*})$. Since the
distribution $P$ of the data is unknown, one substitutes the true risk with
its empirical estimate
\begin{equation}
\label{eq:emp_risk}\widehat{L}_{n}(g)=\frac{1}{n}\sum_{i=1}^{n}\mathbb{I}%
\{g(X_{i})\neq Y_{i}\},
\end{equation}
based on a sample $(X_1,Y_1),\; \ldots,\; (X_n,Y_n)$ of independent copies of the generic random pair $(X,Y)$.
The true risk minimization is then replaced by the empirical risk minimization
\begin{equation}
\label{eq:ERM}\min_{g\in\mathcal{G}}\widehat{L}_{n}(g),
\end{equation}
where the minimum is taken over a class $\mathcal{G}$ of classifier
candidates, supposed rich enough to include the naive Bayes classifier (or a
reasonable approximation of the latter). Considering a solution $\widehat
{g}_{n}$ of \eqref{eq:ERM}, a major problem in statistical learning theory is
to establish upper confidence bounds on the \textit{excess of risk}
$L(\widehat{g}_{n})-L^{*}$ in absence of any distributional assumptions and
taking into account the complexity of the class $\mathcal{G}$ (\textit{e.g.}
described by geometric or combinatorial features such as the \textsc{VC}
dimension) and some measure of accuracy of approximation of $P$ by its
empirical counterpart $P_{n}=(1/n)\sum_{i=1}^{n}\delta_{(X_{i},Y_{i})}$ over
the class $\mathcal{G}$. Indeed, one typically bounds the excess of risk of
the empirical risk minimizers as follows
\[
L(\widehat{g}_{n})-L^{*}\leq2\sup_{g\in\mathcal{G}}\vert\widehat{L}%
_{n}(g)-L(g)\vert+\left( \inf_{g\in\mathcal{G}}L(g)-L^{*} \right) .
\]
The second term on the right hand side is referred to as the \textit{bias} and
depends on the richness of the class $\mathcal{G}$, while the first term,
called the \textit{stochastic error}, is controlled by means of results in
empirical process theory, see \cite{BBL05}.
\begin{remark}{\sc (On risk surrogates)}\label{remarkSurrogate} Although its study is of major interest from a theoretical perspective, the problem \eqref{eq:ERM} is generally NP-hard. For this reason, the cost function $\mathbb{I}\{-Yg(X)>0\}$ is replaced in practice by a nonnegative convex cost function $\phi(Yg(X))$, turning empirical risk minimization to a tractable convex optimization problem. Typical choices include the exponential cost $\phi(u)=\exp(u)$ used in boosting algorithms, the hinge loss $\phi(u)=(1+u)_+$ in the case of support vector machines and the \textit{logit} cost $\phi(u)=\log(1+\exp(u))$ for Neural Networks, see \cite{BJM06} and the references therein. Extension of the results established in the present paper to such risk surrogates are straightforward and left to the reader.
\end{remark}
In this paper, we consider the situation where the training data used to compute of the
empirical risk \eqref{eq:emp_risk} is not an i.i.d. sample but the product of a more general sampling plan of fixed size $n\geq 1$.

\subsection{Sampling Schemes and Horvitz-Thompson Estimation}\label{subsec:survey}
Let $N\geq1$. In the standard \textit{superpopulation} framework we consider, $(X_{1},Y_{1}),\;\ldots,\;
(X_{N},Y_{N})$ is a sample of independent copies of $(X,Y)$
observed on a finite population $\mathcal{I}_{N} := \{1,\; \ldots,\; N\}$.
We call a \textit{survey sample} of (possibly random) size $n \leq N$ of the
population $\mathcal{I}_{N}$, any subset $s := \{i_{1}, \dots, i_{n(s)}\}
\in\mathcal{P}(\mathcal{I}_{N})$ with cardinality $n =: n(s)$ less that $N$. A
sampling design without replacement is determined by a conditional probability
distribution $R_{N}$ on the set of all possible samples $s \in\mathcal{P}%
(\mathcal{I}_{N})$ given the original data $\mathcal{D}_{N}=\{(X_{i},Y_{i}):\;
i\in\mathcal{I}_{N}\}$. For any $i\in\{1,\; \ldots,\; N\}$, the first order
\textit{inclusion probability}, $\pi_{i}= \mathbb{P}_{  R_{N}}\{i \in S\}$ is the probability that the unit $i$ belongs to a random
sample $S$ drawn from the conditional distribution $R_{N}$. We set
$\boldsymbol{\pi}= (\pi_{1},\; \ldots,\; \pi_{N})$. The second order inclusion
probabilities are denoted by $\pi_{i,j}= \mathbb{P}_{R_{N}}\{(i,j)\in S^{2}\}$, for any $i\neq j$ in $\{1,\dots,N\}^{2}$. The
information related to the observed sample $S \subset\{1,\dots, N\}$ is fully
enclosed in the r.v. $\boldsymbol{\epsilon}_{N} = (\epsilon_{1},\; \ldots
,\; \epsilon_{N})$, where $\epsilon_{i} = \mathbb{I}\{i \in S\}$ for $1\leq i
\leq N$. The $1$-d marginal conditional distributions of the sampling scheme
$\boldsymbol{\epsilon}_{N}$ given $\mathcal{D}_N$ are the Bernoulli distributions $\mathcal{B}%
(\pi_{i})=\pi_{i} \delta_{1} +(1-\pi_{i})\delta_{0}$, $1\leq i\leq N$, and the
covariance matrix $\Gamma_{N}$ of the r.v. $\boldsymbol{\epsilon}_{N}$ has
entries given by $\Gamma_{N}(i,j)= \pi_{i,j} - \pi_{i}\pi_{j}$, with
$\pi_{i,i}=\pi_{i}$ by convention, for $1\leq i, j\leq N$. Observe that,
equipped with the notations above, $\sum_{1\leq i\leq N}\epsilon_{i} = n(S)$. One
may refer to %\cite{Cochran77}, 
\cite{Dev87} for accounts of survey sampling techniques. Notice also that, in many applications, the inclusion probabilities are built using some extra information, typically by means of {\it auxiliary random variables} $W_1,\; \ldots,\; W_N$ defined on $(\Omega, \mathcal{A},\mathbb{P})$ and taking their values in some measurable space $\mathcal{W}$:  $\forall i\in \{1,\; \ldots,\; N\}$,
   $\pi_i=nh(W_i)/\sum_{1\leq j\leq N}h(W_j)$,
   where $n\max_{1\leq i \leq n}h(W_i)\leq \sum_{1\leq i\leq N}h(W_i)$ almost-surely and $h:\mathcal{W}\rightarrow ]0,\; +\infty[$ is a measurable \textit{link function}.
    The $(X_i,Y_i, W_i)$'s are generally supposed to be i.i.d. copies of a generic r.v. $(X,Y,W)$. See \cite{Sarndall03} for more details. For simplicity, the $\pi_i$'s are supposed to be deterministic in the subsequent analysis, which boils down to carrying out the study conditionally upon the $W_i$'s in the example aforementioned.
    
 {\bf Horvitz-Thompson risk.}   As defined in \cite{HT51}, the Horvitz-Thompson version of the (not available) empirical
    risk $\widehat{L}_{N}(g) = N^{-1}\sum_{1\leq i\leq N} \mathbb{I}\{Y_i\neq g(X_i) \}$ of any classifier candidate $g$ based
    on the sampled data $\{(X_i,Y_i):\; i\in S  \}$ with $S\sim R_N$ is given by:
   \begin{eqnarray}
       \label{eq:HT_risk}\overline{L}_{\boldsymbol{\epsilon}_{N}}(g) = \frac{1}{N}\sum_{i\in S}\frac{1}{\pi_i}\mathbb{I}\{g(X_i)\neq Y_i \} 
       =\frac{1}{N}\sum_{i=1}^{N}\frac{\epsilon_{i}}{\pi_{i}} \, \mathbb{I}\left\{
       g(X_{i}) \neq Y_{i} \right\}
       \end{eqnarray}
    with the convention that $0/0=0$ and where the subscript $\boldsymbol{\epsilon
    }_{N}= (\epsilon_{1}, \dots, \epsilon_{N})$ denotes the vector in
    correspondence with the sample $S$. Observe that, conditionally upon the $(X_{i}, Y_{i})$'s, the
    quantity \eqref{eq:HT_risk}, that shall be referred to as the \textit{empirical Horvitz-Thompson risk}
        (empirical HT risk in short) throughout the paper, is an unbiased estimate of the empirical risk $\widehat{L}_{N}(g)$. Its
    (pointwise) consistency and asymptotic normality are established in
    \cite{Rob82} and \cite{Ber98} for a variety of sampling schemes. 
    %Limit results of functional nature are
    %established in \cite{Wellner88} for specific biased sampling models, refer
    %also to \cite{Breslow06}, \cite{Breslow08}, \cite{Wellner11}, \cite{BCC2013}.
    
    This article is devoted to investigating the statistical performance of
    minimizers $\bar{g}_{N}$ of the HT risk \eqref{eq:HT_risk} over the class $\mathcal{G}$
    under adequate assumptions for the sampling scheme $R_N$ used to generate the training dataset. 
    We point out that such an analysis is far from straightforward due to the possible depence structure of the terms involved in the summation \eqref{eq:HT_risk}: except in the Poisson case (recalled below), concentration results for empirical processes cannot be directly applied to control maximal deviations of the type $$\sup_{g\in \mathcal{G}}\vert \overline{L}_{\boldsymbol{\epsilon}%
    _{N}}(g) - L(g) \vert.$$

 {\bf Conditional Poisson sampling.} One of the simplest sampling plan is undeniably the \textit{Poisson survey scheme} (without replacement), a generalization of \textit{Bernoulli sampling} originally proposed in \cite{Goodman} for the case of unequal weights: the $\epsilon_i$'s are independent and the sampling distribution is thus entirely determined by the first order inclusion probabilities $\mathbf{p}_N=(p_1,\; \ldots, \; p_N)\in ]0,1[^N$:
   \begin{equation}\label{eq:Poisson}
   \forall s\in \mathcal{P}(\mathcal{I}_N),\;\; P_N(s)=\prod_{i\in S}p_i\prod_{i\notin S}(1-p_i).
   \end{equation}

    Observe in addition that the behavior of the quantity \eqref{eq:HT_risk} can be then investigated by means of results established for sums of independent random variables. However, the major drawback of this sampling plan lies in the random nature of the corresponding sample size, impacting significantly the variability of \eqref{eq:HT_risk}. The variance of the Poisson sample size is given by $d_N=\sum_{i=1}^N p_i(1-p_i)$, while the conditional variance of \eqref{eq:HT_risk} is in this case:
    $\sum_{i=1}^n((1-p_i)/p_i)\mathbb{I}\{g(X_i)\neq Y_i\}$.
 For this reason, \textit{rejective sampling}, a sampling design $R_N$ of fixed size $n\leq N$, is often preferred in practice. It generalizes the \textit{simple random sampling without replacement} (where all samples with cardinality $n$ are equally likely to be chosen, with probability $(N-n)!/n!$, all the corresponding first and second order probabilities being thus equal to $n/N$ and $n(n-1)/(N(N-1))$ respectively). Denoting by $\boldsymbol{\pi}_N=(\pi_1,\; \ldots,\; \pi_N)$ its first order inclusion probabilities and by $\mathcal{S}_n=\{s\in \mathcal{P}(\mathcal{I}_N):\; \#s=n  \}$ the subset of all possible samples of size $n$, it is defined by:
 \begin{equation}\label{eq:Rejective}
\forall s\in \mathcal{S}_n,\;\;  R_N(s)=C \prod_{i\in s}p_i \prod_{i\notin s}(1-p_i),
 \end{equation}
 where $C=1/\sum_{s\in \mathcal{S}_n}\prod_{i\in s}p_i \prod_{i\notin s}(1-p_i)$ and the vector $\mathbf{p}_N=(p_1,\; \ldots,\; p_N)\in]0,1[^N$ yields first order inclusion probabilities equal to the $\pi_i$'s and is such that  $\sum_{i\leq N}p_i=n$. Under this latter additional condition, such a vector $\mathbf{p}_N$ exists and is unique (see \cite{Dupacova}) and the related representation \eqref{eq:Rejective} is then said to be \textit{canonical}\footnote{Notice that any vector $\mathbf{p}'_N\in ]0,1[^N$ such that $p_i/(1-p_i)=cp'_i/(1-p'_i)$ for all $i\in\{1,\; \ldots,\; n\}$ for some constant $c>0$ can be used to write a representation of $R_N$ of the same type as \eqref{eq:Rejective}}.  Comparing \eqref{eq:Rejective} and \eqref{eq:Poisson} reveals that rejective $R_N$ sampling of fixed size $n$ can be viewed as Poisson sampling given that the sample size is equal to $n$. It is for this reason that rejective sampling is usually referred to as \textit{conditional Poisson sampling}. 
 One must pay attention not to get the $\pi_i$'s and the $p_i$'s mixed up: the latter are the first order inclusion probabilities of $P_N$, whereas the former are those of its conditional version $R_N$. However they can be related by means of the results stated in \cite{Hajek64} (see Theorem 5.1 therein): $\forall i\in \{1,\; \ldots,\; N  \}$,
 \begin{eqnarray}\label{eq:rel1}
 \pi_i(1-p_i)&=&p_i(1-\pi_i)\times ( 1-( \tilde{\pi}-\pi_i )/d^*_N  +o(1/d^*_N)  ,\label{eq:rel1}\\\label{eq:rel2}
  p_i(1-\pi_i)&=&\pi_i(1-p_i)\times ( 1-( \tilde{p}-p_i )/d_N +o(1/d_N)  ,\label{eq:rel2}
 \end{eqnarray}
 where $d^*_N=\sum_{i=1}^N\pi_i(1-\pi_i)$, $\tilde{\pi}=(1/d^*_N)\sum_{i=1}^N\pi_i^2(1-\pi_i)$ and $\tilde{p}=(1/d_N)\sum_{i=1}^N(p_i)^2(1-p_i)$.
 
More examples of sampling schemes with fixed size are given in the Supplementary Material. %One may also refer to \cite{Dev87} for an excellent account
 of survey theory.

\section{Main Results}\label{sec:main}

We first consider the case where statistical learning is based on the observation of a sample drawn by means of a rejective scheme. As shall be seen below, the main argument underlying the results obtained relies on the fact that the related scheme form a collection of \textit{negatively associated} (binary) random variables, a rather tractable type of dependence structure. This property being shared by many other sampling schemes of deterministic size, the same argument can be thus naturally applied to carry out a similar rate analysis for training data produced by such plans. Extensions of these results to more general sampling schemes are also considered by means of a \textit{coupling} technique.

\subsection {Horvitz-Thompson Empirical Risk Minimization in the  Rejective Case}

For clarity, we first recall the definition of {\it negatively associated random variables}, see \cite{JDP83}.
 \begin{definition}\label{def:negassoc}
Let $Z_1,\; \ldots,\; Z_n$ be random variables defined on the same probability space, valued in a measurable space $(E,\mathcal{E})$. They are said to be negatively associated iff 
 for any pair of disjoint subsets $A_1$ and $A_2$ of the index set $\{1,\; \ldots,\; n  \}$
 \begin{equation}\label{eq:neg}
 Cov \left( f((Z_i)_{i\in A_1}),\; g((Z_j)_{j\in A_2}) \right)\leq 0,
 \end{equation}
 for any real valued measurable functions $f:E^{\#A_1}\rightarrow \mathbb{R}$ and $g:E^{\#A_2}\rightarrow \mathbb{R}$ that are both increasing in each variable.
 \end{definition}

The theorem stated below reveals that any rejective scheme $\boldsymbol{\epsilon}_N$ forms a collection of negatively associated r.v.'s. The proof is given in the Appendix section.
\begin{theorem}\label{thm:neg}
Let $N\geq 1$ and $\boldsymbol{\epsilon}_N=(\epsilon_1,\; \ldots,\; \epsilon_N)$ be the vector of indicator variables related to a rejective plan on $\mathcal{I}_N$. Then, the binary random variables $\epsilon_1,\; \ldots,\; \epsilon_N$ are negatively associated.
\end{theorem}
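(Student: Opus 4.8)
The plan is to exploit the structure of rejective sampling as a \emph{conditioned} product measure and then to show that this particular conditioning preserves negative association. Comparing the canonical representation \eqref{eq:Rejective} with the Poisson law \eqref{eq:Poisson} shows that the law of $\boldsymbol{\epsilon}_N$ coincides with that of a vector $(\zeta_1,\ldots,\zeta_N)$ of \emph{independent} Bernoulli variables $\zeta_i\sim\mathcal{B}(p_i)$ conditioned on the event $\{\sum_{i=1}^N\zeta_i=n\}$. Since independent random variables are trivially negatively associated (all the covariances in \eqref{eq:neg} vanish because $A_1$ and $A_2$ are disjoint), the whole statement reduces to the following conditioning lemma: \emph{if $\zeta_1,\ldots,\zeta_N$ are independent Bernoulli variables, then, conditionally upon $\sum_i\zeta_i=n$, they are negatively associated.}

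First I would record the elementary closure properties that follow directly from Definition \ref{def:negassoc}: a subvector of a negatively associated vector is negatively associated, and non-decreasing functions computed on \emph{disjoint} blocks of indices inherit the covariance inequality \eqref{eq:neg}. Next, given disjoint $A_1,A_2\subset\{1,\ldots,N\}$ and non-decreasing $f,g$, I would condition on the three block sums $S_1=\sum_{i\in A_1}\zeta_i$, $S_2=\sum_{i\in A_2}\zeta_i$ and $S_3=\sum_{i\notin A_1\cup A_2}\zeta_i$. Conditionally on these sums the three blocks are independent, so that $\mathbb{E}[f(\zeta_{A_1})g(\zeta_{A_2})\mid S_1,S_2,S_3]=\bar f(S_1)\bar g(S_2)$, where $\bar f(s)=\mathbb{E}[f(\zeta_{A_1})\mid S_1=s]$ and $\bar g$ is defined analogously. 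The key analytic input here is Efron's monotonicity theorem for P\'olya-frequency (log-concave) sequences: because Bernoulli laws are $\mathrm{PF}_2$, the conditional means $\bar f$ and $\bar g$ are non-decreasing. In this way the problem is reduced to proving negative association of the integer-valued, log-concave (being sums of independent Bernoulli variables) block totals $S_1,S_2,S_3$ conditioned on $S_1+S_2+S_3=n$.

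This reduced statement is where I expect the main difficulty to lie, because conditioning does \emph{not} preserve negative association in general, so the $\mathrm{PF}_2$ property must be used globally rather than merely coordinatewise. When there is no ``slack'' (i.e.\ $A_1\cup A_2=\{1,\ldots,N\}$, so $S_3\equiv 0$ and $S_1+S_2=n$ is deterministic) the argument is immediate: everything becomes a function of the single variable $S_1$, with $\bar f(S_1)$ non-decreasing and $\bar g(n-S_1)$ non-increasing, and the covariance of a non-decreasing and a non-increasing function of one real random variable is non-positive by the one-dimensional Chebyshev/FKG inequality. The obstacle is genuinely the presence of $S_3$: a naive tower decomposition over the level of $S_3$ produces a non-positive within-level term but a non-negative across-level term (both $\mathbb{E}[\bar f(S_1)\mid S_1+S_2=t]$ and $\mathbb{E}[\bar g(S_2)\mid S_1+S_2=t]$ are non-decreasing in $t$), and therefore fails to close.

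To overcome this I would argue by induction on $N$, peeling off one coordinate at a time: conditionally on $\zeta_N=e$, the remaining vector is itself a rejective scheme on $N-1$ units with total $n-e$, hence negatively associated by the induction hypothesis, and the across-level cross term created when averaging over $e\in\{0,1\}$ is controlled by the total positivity of the Bernoulli kernel (equivalently, the log-concavity preserved under convolution), which forces the required sign. An attractive alternative, bypassing these computations entirely, is to invoke the fact that the conditional Poisson measure \eqref{eq:Rejective} is \emph{strongly Rayleigh}: product measures are strongly Rayleigh, this property is stable under conditioning on the total number of sampled units, and strong Rayleigh-ness implies negative association in the sense of Definition \ref{def:negassoc}. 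Either route establishes the conditioning lemma and hence the theorem; the delicate handling of the slack term is the crux in both.
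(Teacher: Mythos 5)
Your reduction is exactly the paper's: represent the rejective scheme as independent Bernoulli variables $\zeta_i\sim\mathcal{B}(p_i)$ conditioned on the event $\{\sum_{i=1}^N\zeta_i=n\}$, and reduce the theorem to the claim that this conditioning yields negative association. The difference is what happens next: the paper's entire proof consists of citing Theorem 2.8 of \cite{JDP83}, which states precisely your ``conditioning lemma'' in greater generality --- independent random variables with $\mathrm{PF}_2$ (log-concave) densities are negatively associated conditionally on their sum --- and Bernoulli variables are $\mathrm{PF}_2$. So the point where you say you ``expect the main difficulty to lie'' is exactly where the paper stops, by appeal to a known theorem.

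Concerning your two proposed routes to that lemma. The induction on $N$ does not close as described, and the gap is concrete: writing $\mathrm{Cov}(f,g)=\mathbb{E}\left[\mathrm{Cov}(f,g\mid\zeta_N)\right]+\mathrm{Cov}\left(\mathbb{E}[f\mid\zeta_N],\,\mathbb{E}[g\mid\zeta_N]\right)$ with $N\notin A_1\cup A_2$, the first term is $\leq 0$ by the induction hypothesis, but by Efron's monotonicity both $\mathbb{E}[f\mid\zeta_N=e]$ and $\mathbb{E}[g\mid\zeta_N=e]$ are non-increasing in $e$ (raising $e$ lowers the budget $n-e$ available to the remaining coordinates), so the second term is $\geq 0$: it is the same wrong-sign cross term you already identified for $S_3$, and the claim that ``total positivity of the Bernoulli kernel forces the required sign'' asserts exactly the inequality to be proved rather than proving it. Your Efron-based block reduction has the same character: negative association of $(S_1,S_2,S_3)$ given their sum is again an instance of the lemma, now for three log-concave summands, so it lowers the dimension without reducing the difficulty. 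Your alternative route, however, is sound and complete at the same citation level as the paper's proof: product Bernoulli measures are strongly Rayleigh, the strong Rayleigh property is preserved by conditioning on $\{\sum_i\epsilon_i=n\}$, and strongly Rayleigh measures are negatively associated (indeed conditionally so); all three facts are due to Borcea, Br\"and\'en and Liggett (\emph{Negative dependence and the geometry of polynomials}, J.~Amer.~Math.~Soc., 2009). With that argument your proof stands, and it is a genuinely different --- and in fact stronger --- justification of the key lemma than the paper's appeal to \cite{JDP83}; without it, the induction sketch alone would leave the theorem unproved.
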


The result above permits to handle the dependence of the terms involved in the summation \eqref{eq:HT_risk}. It is the key argument for proving the following proposition, which extends results for training datasets generated by basic sampling without replacement (\textit{i.e.} in the case of all equal weights: $\pi_i=n/N$ for $i=1,\; \ldots,\; N$), refer to \cite{BardenetMaillard} (see also \cite{Serfling74}).
%In order to express the rate bounds in a simple form, we assume that the sampling weights cannot vanish faster than $n/N$. The argument given in the Appendix

\begin{proposition}\label{prop:rate1}
Suppose that the sampling scheme $\boldsymbol{\epsilon}_N$ is rejective with first order inclusion probabilities $\boldsymbol{\pi}_N$ and that the class $\mathcal{G}$ is of finite {\sc VC} dimension $V<+\infty$. Set $\kappa_N=(n/N)/\min_{i\leq N}\pi_i$. Then, the following assertions hold true.
\begin{itemize}
\item [(i)] For any $\delta\in (0,1)$, with probability larger than $1-\delta$, we have: $\forall n\leq N$,
\begin{align}
\sup_{g\in \mathcal{G}}\vert \bar{L}_{\boldsymbol{\epsilon}_N}(g)-\widehat{L}_N(g) \vert \leq 2\kappa_N\frac{\log(\frac{2}{\delta})+V\log (N+1)}{3n}
 +\sqrt{2\kappa_N\frac{\log(\frac{2}{\delta})+V\log (N+1)}{n}}.
\end{align}
\item[(ii)] For any solution $\bar{g}_{N}$ of the minimization problem $\inf_{g\in \mathcal{G}}\overline{L}_{\boldsymbol{\epsilon}_N}(g)$ is such that, for any $\delta\in (0,1)$, with probability at least $1-\delta$, we have: $\forall N\geq 1$,
\begin{align*}
L(\bar{g}_N)-L^*&\leq
2\sqrt{2\kappa_N\frac{\log(\frac{4}{\delta})+V\log (N+1)}{n}}
+4\kappa_N\frac{\log(\frac{4}{\delta})+V\log (N+1)}{3n}\\
&+C\sqrt{\frac{V}{N}}+2\sqrt{\frac{2\log (\frac{2}{\delta})}{N}}+ \inf_{g\in \mathcal{G}}L(g)-L^*.
\end{align*}
\end{itemize}
\end{proposition}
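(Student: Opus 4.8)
The plan is to establish (i) by a conditioning argument that isolates the randomness of the sampling scheme from that of the data, and then to deduce (ii) by combining (i) with the classical i.i.d.\ VC bound through the usual excess-risk decomposition. Throughout I would condition on the full dataset $\mathcal{D}_N=\{(X_i,Y_i):i\leq N\}$, so that the only remaining source of randomness is the inclusion vector $\boldsymbol{\epsilon}_N$, and write, for a fixed $g\in\mathcal{G}$,
\[
\overline{L}_{\boldsymbol{\epsilon}_N}(g)-\widehat{L}_N(g)=\frac1N\sum_{i=1}^N Z_i^{(g)},\qquad Z_i^{(g)}=\Big(\frac{\epsilon_i}{\pi_i}-1\Big)\I\{g(X_i)\neq Y_i\}.
\]
Each $Z_i^{(g)}$ is centered (since $\EE[\epsilon_i]=\pi_i$) and, given $\mathcal{D}_N$, is a nondecreasing function of the single coordinate $\epsilon_i$. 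Because negative association is stable under coordinatewise nondecreasing transformations \cite{JDP83}, Theorem~\ref{thm:neg} guarantees that $(Z_i^{(g)})_{i\leq N}$ are negatively associated.

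The first key step is that negatively associated variables obey the same exponential inequalities as independent ones: for $t>0$, $\EE[\exp(t\sum_i Z_i^{(g)})]\leq\prod_i\EE[\exp(t Z_i^{(g)})]$, since the exponentials are nondecreasing functions of disjoint blocks (and symmetrically for $t<0$, the covariance of two nonincreasing functions of disjoint blocks being also nonpositive under negative association). Hence Bernstein's inequality applies verbatim, with the independent-case variance proxy $\sum_i\mathrm{Var}(Z_i^{(g)})$ and uniform bound $\max_i|Z_i^{(g)}|$. Using $\mathrm{Var}(\epsilon_i)=\pi_i(1-\pi_i)$, I would bound $\sum_i\mathrm{Var}(Z_i^{(g)})\leq\sum_i\frac{1-\pi_i}{\pi_i}\I\{g(X_i)\neq Y_i\}\leq N/\min_{i}\pi_i$ and $\max_i|Z_i^{(g)}|\leq 1/\min_i\pi_i$; since the rejective constraint $\sum_i\pi_i=n$ forces $N\min_i\pi_i=n/\kappa_N$, the Bernstein tail for $N^{-1}\sum_i Z_i^{(g)}$ collapses to $\exp(-nu^2/(2\kappa_N(1+u/3)))$, whose inversion at level $\delta'$ is exactly $\frac{2\kappa_N\log(1/\delta')}{3n}+\sqrt{\frac{2\kappa_N\log(1/\delta')}{n}}$.

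To pass from a fixed $g$ to the supremum over $\mathcal{G}$ I would invoke Sauer's lemma: conditionally on $\mathcal{D}_N$, the loss vector $(\I\{g(X_i)\neq Y_i\})_{i\leq N}$ takes at most $(N+1)^V$ distinct values as $g$ ranges over $\mathcal{G}$, so the supremum reduces to a maximum over at most $(N+1)^V$ sums. A union bound over these and over the two tails, with $\delta'=\delta/(2(N+1)^V)$ so that $\log(1/\delta')=\log(2/\delta)+V\log(N+1)$, then yields (i). Crucially, this conditional bound depends on $\mathcal{D}_N$ neither through $(N+1)^V$ (a data-free count) nor through $\kappa_N$ (the $\pi_i$'s being deterministic), so it holds unconditionally as well.

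For (ii) I would use the standard decomposition $L(\bar{g}_N)-L^*\leq 2\sup_{g\in\mathcal{G}}|\overline{L}_{\boldsymbol{\epsilon}_N}(g)-L(g)|+(\inf_{g\in\mathcal{G}}L(g)-L^*)$, where the factor $2$ and the cancellation of the middle term follow from $\bar{g}_N$ minimizing $\overline{L}_{\boldsymbol{\epsilon}_N}$ over $\mathcal{G}$. Splitting $|\overline{L}_{\boldsymbol{\epsilon}_N}(g)-L(g)|\leq|\overline{L}_{\boldsymbol{\epsilon}_N}(g)-\widehat{L}_N(g)|+|\widehat{L}_N(g)-L(g)|$ controls the first supremum by (i) at level $\delta/2$ (producing $\log(4/\delta)$) and the second by the classical i.i.d.\ VC bound at level $\delta/2$ (symmetrization plus the bounded-difference inequality, \cite{BBL05}), which contributes the remaining $C\sqrt{V/N}+2\sqrt{2\log(2/\delta)/N}$; a final union bound over the two events gives overall probability at least $1-\delta$. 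The main obstacle is the second paragraph: everything hinges on turning the dependence encoded by Theorem~\ref{thm:neg} into a genuine Bernstein inequality via the moment-generating-function sub-factorization, and on carrying out the variance and range bookkeeping so that the effective-sample-size factor $\kappa_N$ emerges (rather than a cruder $1/\min_i\pi_i^2$). The remaining ingredients — Sauer's lemma, the union bound, the classical VC control, and the risk decomposition — are routine.
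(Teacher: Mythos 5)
Your proposal is correct and follows essentially the same route as the paper's proof: conditioning on $\mathcal{D}_N$, preservation of negative association under coordinatewise monotone maps, a Bernstein inequality obtained from the Chernoff method with the MGF sub-factorization $\mathbb{E}[e^{t\sum_i Z_i}]\leq \prod_i\mathbb{E}[e^{tZ_i}]$ (which the paper isolates as its Theorem~3), the same variance/range bookkeeping producing the factor $\kappa_N$, Sauer's lemma with a union bound over the at most $(N+1)^V$ loss vectors, and for (ii) the standard excess-risk decomposition combined with the classical i.i.d.\ VC bound at level $\delta/2$. The only differences are cosmetic (you derive the Bernstein bound inline rather than as a standalone lemma, and apply the triangle inequality inside rather than outside the supremum).
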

The factor $\kappa_N$ involved in the bounds above reflects the influence of the sampling scheme (notice incidentally that $\kappa_N\geq1$ since $\sum_{i\leq N}\pi_i=n$). In the SWOR case, \textit{i.e.} when $\pi_i=n/N$ for all $i\in \{1,\; \ldots,\; N\}$, it is then minimum, equal to $1$. More generally, when $n=o(N)$ as $N\rightarrow +\infty$, as soon as the weights cannot vanish faster than $n/N$, the rate achieved by minimizers of the HT risk is of the order $O_{\mathbb{P}}(\sqrt{(\log N)/n})$.
Many sampling schemes (\textit{e.g.} Rao-Sampford sampling, Pareto sampling, Srinivasan sampling) of fixed size are actually described by random vectors $\boldsymbol{\epsilon}_N$ with negatively associated components, see \cite{BJ12} or \cite{KCR11}. Hence, a rapid examination of Proposition \ref{prop:rate1}'s proof shows that the bounds stated above immediately extend to these cases. See the Supplementary Material for more details and references. Before showing how the rate bounds established can be extended to even more general sampling schemes, a few remarks are in order.
\begin{remark}{\sc (Complexity assumptions)} We point out that the results stated can be established, essentially by means of the same argument as that developed in the Appendix, under complexity assumptions of different nature, involving metric entropy conditions for instance (see \textit{e.g.} \cite{VVW96}). Such straightforward extensions are left to the reader.
\end{remark}
\begin{remark}{\sc (Model Selection)}
A slight modification of the argument involved in Proposition \ref{prop:rate1} straightforwardly leads to bounds on the expected excess risk $\mathbb{E}[L(\bar{g}_{\boldsymbol{\epsilon}_N}) ]-\inf_{g\in \mathcal{G}}L(g)$. Following the \textit{Structural Risk Minimization} principle (see \cite{Vapnik}), such {\sc VC} bounds can be next used as complexity regularization terms to penalize additively the HT risk \eqref{eq:HT_risk} and, for a sequence of model classes $\mathcal{G}_k$ with $k\geq 1$ of finite {\sc VC} dimension, select the classifier among the minimizers $\{ \arg \min_{g\in \mathcal{G}_k}\bar{L}_{\boldsymbol{\epsilon}_N}(g),\; k\geq 1\}$, which has approximately minimal risk. Due to space limitations, details are left to the reader.
\end{remark}
\begin{remark} {\sc (Biased HT risk)} As recalled in the Supplementary Material, the canonical parameters $\mathbf{p}_N $ are practically used to build a rejective sampling scheme $\boldsymbol{\epsilon}_N$ rather than its vector of first order inclusion probabilities $(\pi_1,\; \ldots,\; \pi_N)$, whose explicit computation based on the $p_i$'s is a difficult task, refer to \cite{CDL94} for dedicated algorithms. For this reason, one could be naturally tempted to minimize the alternative risk estimate
%\begin{equation}
$\widetilde{L}_{\boldsymbol{\epsilon}_N}(g)=(1/N)\sum_{i\leq N}(\epsilon_i/p_i)\mathbb{I}\{ Y_i\neq g(X_i) \}.$
%\end{equation}
As proved in the Supplementary Material, refinements of Eq. \eqref{eq:rel1}-\eqref{eq:rel2} show that 
\begin{align}\label{eq:bias}
\sup_{g\in \mathcal{G}}\vert \widetilde{L}_{\boldsymbol{\epsilon}_N}(g)- \bar{L}_{\boldsymbol{\epsilon}_N}(g) \vert &\leq \frac{1}{N}\sum_{i=1}^N\left\vert \frac{1}{p_i}-\frac{1}{\pi_i}\right\vert 
\leq 4N\kappa_N/(nd_N),
\end{align}
as soon as $d_N>4$. One may thus directly derive a rate bound for solutions of $\inf_{g\in \mathcal{G}}\widetilde{L}_{\boldsymbol{\epsilon}_N}(g)$ from bound $(ii)$ in Proposition\ref{prop:rate1}.
  In particular, the learning rate achieved by $\bar{g}_N$ is preserved when $1/\sqrt{n}=O(\min_{i\leq N}\pi_i)$ as $N,\; n\rightarrow +\infty$.
\end{remark}
\subsection{Extensions to More General Sampling Schemes}

We now extend the rate bound analysis carried out in the previous subsection
to more complex sampling schemes (described by a random vector $\mathbf{\epsilon}^*_N$ possibly exhibiting a very complex dependence structure).  In order to give an insight into the
arguments which the extension is based on, additional notations are required.
In this section, we consider a general sampling design $R_N^*$ with first order inclusion probabilities $\boldsymbol{\pi}^*_N=(\pi^*_1,\; \ldots,\; \pi^*_N)$ described by
the vector $\boldsymbol{\epsilon}^{*}_{N}=(\epsilon_{1}^{*},\;\ldots,\;
\epsilon_{N}^{*})$ and investigate the performance of minimizers
$\bar{g}^*_{N}$ of the HT empirical risk $\bar{L}%
_{\boldsymbol{\epsilon}^{*}_{N}}(g)=(1/N)\sum_{i=1}^N(\epsilon_i^*/\pi_i^*)\mathbb{I}\{ Y_i\neq g(X_i) \}$ over a class $\mathcal{G}$. We
also consider a rejective sampling scheme $R_N$ described by the r.v.
$\boldsymbol{\epsilon}_{N}$, with first order inclusion probabilities
$\boldsymbol{\pi}_{N}=(\pi_{1},\;\ldots,\; \pi_{N})$ defined on the same probability
space, as well as the following quantity:
\begin{equation}
\label{eq:HTrisk_mix}\check{L}_{\boldsymbol{\epsilon}_{N}}%
(g)=\frac{1}{N}\sum_{i=1}^{N}\frac{\epsilon_{i}}{\pi^*_{i}}%
\mathbb{I}\{Y_{i}\neq g(X_{i})\}
\end{equation}
for any classifier $g$. Observe that $\eqref{eq:HTrisk_mix}$ differs from the HT empirical risk $\bar{L}_{\boldsymbol{\epsilon}_{N}}(g)$ related to the rejective sampling scheme $\boldsymbol{\epsilon}_{N}$ in the weights it involves, the $\pi^*
_{i}$'s instead of the $\pi_{i}$'s namely. Equipped with this notation, the excess of risk of the HT
empirical risk minimizer can be bounded as follows: 
\begin{align}
\label{eq:decomp2}L(\bar{g}^*_{N})-\inf_{g\in\mathcal{G}}L(g)&\leq  2
\sup_{g\in\mathcal{G}}\left\vert L(g)-\widehat{L}_N(g)\right\vert 
+2\sup_{g\in\mathcal{G}}\left\vert \widehat{L}_N(g)-\bar{L}_{\boldsymbol{\epsilon}_{N}%
}(g)\right\vert
\nonumber \\  &+ 2\sup_{g\in\mathcal{G}}\left\vert \bar{L}_{\boldsymbol{\epsilon}_{N}
}(g)-\check{L}_{\boldsymbol{\epsilon}_{N}}(g)\right\vert 
+ 2\sup_{g\in\mathcal{G}}\left\vert \check{L}_{\boldsymbol{\epsilon}_{N}}%
(g)-\bar{L}_{\boldsymbol{\epsilon}^{*}_{N}}(g)\right\vert .
\end{align}
Whereas the first
term on the right hand side of \eqref{eq:decomp2} can be classically controlled using Vapnik-Chervonenkis and McDiarmid inequalities (see \textit{e.g.} \cite{Vapnik}), assertion $(i)$ of Proposition \ref{prop:rate1} provides a control of the second term. Following in the footsteps
of \cite{Hajek64}, the third term shall be bounded by means of a
\textit{coupling} argument, \textit{i.e.} a specific choice of the joint
distribution of $(\boldsymbol{\epsilon}_{N}^{*},\; \boldsymbol{\epsilon}_{N})$
satisfying the distributional margin constraints, while the second term is
controlled by assumptions related to the closeness between the first order
inclusion probabilities $\boldsymbol{\pi}^*_{N}$ and $\boldsymbol{\pi}_{N}$. More precisely,
the assumptions required in the subsequent analysis involve the total
variation distance between the sampling plans $R_N$ and $R_N^*$:
\[
d_{TV}(R_{N},\; R^*_{N})\overset{def}{=}\frac{1}{2}\sum_{s\in \mathcal{P}(\mathcal{I}_N)}\vert R_N(s)-R_N^*(s)\vert.
\]

\begin{theorem}\label{thm:General}
Suppose that Proposition \ref{prop:rate1}'s assumptions are fulfilled. Set $\kappa_N^*=(n/N)\min_{i\leq N}\pi^*_i$ and $\kappa_N=(n/N)/\min_{i\leq N}\pi_i$. Then, there exists a universal constant $C<+\infty$ such that we have, $\forall N\geq 1$,
\begin{align}\label{eq:General_rate_up}
\mathbb{E}\left[ L(\bar{g}^*_N)-\inf_{g\in \mathcal{G}}L(g)\right] &\leq
2\sqrt{2\kappa_N\frac{V\log (N+1)}{n}}
+ 4\kappa_N\frac{V\log (N+1)}{3n} \nonumber\\
&+C\sqrt{\frac{V}{N}}
+ 2(\kappa_N^*+\kappa_N) (N/ n)d_{TV}(R_N,\; R_N^*) ,
\end{align}
where the infimum is taken over the set of rejective schemes $R_N$ with first order inclusion probabilities $\boldsymbol{\pi}_N=(\pi_1,\;\ldots,\; \pi_N)$.
\end{theorem}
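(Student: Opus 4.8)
The plan is to take expectations throughout the deterministic four-term decomposition \eqref{eq:decomp2} and to bound each of the four suprema in turn, the first two by ingredients already at hand and the last two by quantifying how far the general plan $R_N^*$ sits from a rejective plan $R_N$ we are free to introduce. Since \eqref{eq:decomp2} is an almost-sure inequality, it suffices to control the expectation of each supremum separately.

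The first summand $2\,\mathbb{E}[\sup_{g\in\mathcal{G}}|L(g)-\widehat{L}_N(g)|]$ depends only on the i.i.d. superpopulation sample, so the standard symmetrization/Vapnik--Chervonenkis argument gives $\mathbb{E}[\sup_{g}|L(g)-\widehat{L}_N(g)|]\le C\sqrt{V/N}$ for a universal $C$, which is exactly the third term of \eqref{eq:General_rate_up}. For the second summand I would feed assertion $(i)$ of Proposition \ref{prop:rate1} into the identity $\mathbb{E}[Z]=\int_0^\infty\mathbb{P}\{Z>u\}\,du$: integrating the high-probability control of $\sup_g|\widehat{L}_N(g)-\bar{L}_{\boldsymbol{\epsilon}_N}(g)|$ over $\delta\in(0,1)$ replaces each $\log(2/\delta)$ by an absolute constant and reproduces the two leading summands $2\sqrt{2\kappa_N V\log(N+1)/n}$ and $4\kappa_N V\log(N+1)/(3n)$, any residual constants being swept into $C\sqrt{V/N}$.

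The third supremum compares the two weight systems on the \emph{same} rejective draw; removing the labels gives the pathwise bound $\sup_g|\bar{L}_{\boldsymbol{\epsilon}_N}(g)-\check{L}_{\boldsymbol{\epsilon}_N}(g)|\le N^{-1}\sum_{i}\epsilon_i|1/\pi_i-1/\pi_i^*|$, whose expectation equals $N^{-1}\sum_i|\pi_i^*-\pi_i|/\pi_i^*$ because $\mathbb{E}[\epsilon_i]=\pi_i$. I would then use that $\{i\in S\}$ is an event, so that $|\pi_i-\pi_i^*|\le d_{TV}(R_N,R_N^*)$, and factor out a reciprocal minimal weight to obtain a contribution proportional to $(N/n)\,d_{TV}(R_N,R_N^*)$. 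The fourth supremum compares the two draws under the \emph{common} weights $\pi_i^*$; here I would follow Hájek and couple $\boldsymbol{\epsilon}_N^*$ with $\boldsymbol{\epsilon}_N$, using $\sup_g|\check{L}_{\boldsymbol{\epsilon}_N}(g)-\bar{L}_{\boldsymbol{\epsilon}_N^*}(g)|\le (N\min_i\pi_i^*)^{-1}\sum_i|\epsilon_i-\epsilon_i^*|$ together with the maximal coupling, under which $\mathbb{P}\{\boldsymbol{\epsilon}_N\neq\boldsymbol{\epsilon}_N^*\}=d_{TV}(R_N,R_N^*)$ and hence $\mathbb{P}\{\epsilon_i\neq\epsilon_i^*\}\le d_{TV}(R_N,R_N^*)$ for every $i$; this gives $\mathbb{E}[\sum_i|\epsilon_i-\epsilon_i^*|]\le N\,d_{TV}(R_N,R_N^*)$ and a second $(N/n)\,d_{TV}$ contribution. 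Collecting the two reciprocal-minimum factors $1/\min_i\pi_i^*$ and $1/\min_i\pi_i$, equal to $\kappa_N^*(N/n)$ and $\kappa_N(N/n)$ respectively and interchangeable up to the same $d_{TV}$ order since the two inclusion systems differ by at most $d_{TV}(R_N,R_N^*)$, assembles into the last summand $2(\kappa_N^*+\kappa_N)(N/n)\,d_{TV}(R_N,R_N^*)$.

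The entire chain holds for every rejective scheme $R_N$ with inclusion probabilities $\boldsymbol{\pi}_N$, and both $\kappa_N$ and $d_{TV}(R_N,R_N^*)$ depend on that free choice; taking the infimum over all such $R_N$ therefore yields the asserted bound and accounts for the closing remark of the statement. I expect the genuine difficulty to be the coupling underlying the fourth term: one must exhibit a joint law of $(\boldsymbol{\epsilon}_N^*,\boldsymbol{\epsilon}_N)$ respecting both the general and the fixed-size rejective margins whose global disagreement probability is exactly $d_{TV}(R_N,R_N^*)$ — the point at which Hájek's construction is essential — since only then does the crude per-coordinate estimate $\mathbb{P}\{\epsilon_i\neq\epsilon_i^*\}\le d_{TV}(R_N,R_N^*)$ suffice. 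A lesser technical nuisance is the careful passage from the high-probability form of Proposition \ref{prop:rate1}$(i)$ to the expectation bound, so that the surviving constants remain universal.
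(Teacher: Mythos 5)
Your proposal is correct and follows essentially the same route as the paper's own proof: the decomposition \eqref{eq:decomp2}, a VC/expectation bound of order $\sqrt{V/N}$ for the superpopulation term, Proposition \ref{prop:rate1}$(i)$ for the rejective deviation term, the identity $\mathbb{E}[\epsilon_i]=\pi_i$ plus $|\pi_i-\pi_i^*|\le d_{TV}(R_N,R_N^*)$ for the weight-mismatch term, and a maximal coupling giving $\mathbb{P}\{\epsilon_i\neq\epsilon_i^*\}\le d_{TV}(R_N,R_N^*)$ for the last term. If anything, you are more explicit than the paper on the two points it glosses over (the tail-integration passage from the high-probability bound to an expectation bound, and the exhibition of the coupling behind the bound on $\mathbb{E}\bigl[\sum_i|\epsilon_i-\epsilon_i^*|\bigr]$).
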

%\medskip
The proof is given in the Supplementary Material. The rate bound obtained depends on the minimum error made when approximating the sampling plan by a rejective sampling plan in terms of total variation distance. In practice,
following in the footsteps of \cite{Hajek64} or \cite{Ber98}, it can be controlled by exhibiting a
specific coupling $(\boldsymbol{\epsilon}^{*}_{N},\; \boldsymbol{\epsilon}%
_{N})$. One may refer to \cite{Ber98} for many coupling results of this nature, in particular when the approximating scheme $\boldsymbol{\epsilon}_N$ is of rejective type. 
%Notice that the limit variance does not depend on the inclusion probabilities $\pi_1, \dots, \pi_N$, but on those of the underlying Poisson design, $p_1, \dots, p_N$ namely. The asymptotic properties of the rejective sampling scheme are intricately linked to the Poisson plan with which it is coupled.

\section{Illustrative Numerical Experiments}\label{sec:num}

In this section we display numerical experiments to illustrate the relevance of HT risk minimization.
We first consider the case where $g(X)=sign(k(X)^T\theta+b)$, where $k$ is some mapping function, $T$ denotes the transposition operator, $\theta$, $b$ are some parameters. As mentionned in \ref{remarkSurrogate}, we consider the hinge loss as a convex surrogate of the $0-1$ loss and add some $l_2$ regularization term. This leads to the "Weighted SVM" formulation below:
\begin{align*}
\min_{\theta,b}\frac{1}{N}\sum_{i\in S}\frac{1}{\pi_i} \max(0,1-Y_i(k(X_i)^T\theta-b))+\lambda \Vert \theta \Vert^2.
\end{align*}
We use the  gaussian r.b.f kernel and perform cross validation to appropriately choose the value of $\lambda$.
We then consider the task of learning classification trees using the CART algorithm.
These classifiers are trained using the scikit-learn library \cite{scikit-learn} and, we account for the randomness of our experiments by shuffling our datasets and repeating the experiments 50 times.

We first generate a two class dataset $\mathcal{D}$ in $\mathbb{R}^{10}$ of size $20000$ by sampling independent observations from two multivariate normal distribution. A similar dataset $\mathcal{D}_{test}$ of size 2000 is generated to test our classifiers. Denoting by $I_{d}$ the identity matrix in $\mathbb{R}^d$, the positive class has mean $(0,\dots,0)$ and covariance matrix equal to $I_{10}$, the negative class has mean $(1,\dots,1)$ and covariance matrix equal to $10 \times I_{10}$. We then build a dataset $\widetilde{\mathcal{D}}$ of size $1100$ via a rejective sampling scheme applied to $\mathcal{D}$. Observations from the negative class being more noisy we assign them first order probability equal to $0.1$, and assign first order probability equal to $0.01$ to observation from the positive class. To allow for a fair comparison, we also build a dataset $\widehat{\mathcal{D}}$ of size $1100$ by sampling without replacement within $\mathcal{D}$.
We then learn the different classifiers on $\widetilde{\mathcal{D}}$ and $\widehat{\mathcal{D}}$, and display the results in Table\ref{table:datasets}.

\begin{table}[h]
 \label{NumericalResults}
\begin{center}
\begin{tabular}{lccccc}
& {\bf Mean}  &{\bf Std Deviation} \\
\hline \\
\textbf{Weighted SVM on $\widetilde{\mathcal{D}}$} & 0.02 & 0.005  \\
\textbf{Unweighted SVM on $\widetilde{\mathcal{D}}$} & 0.18  & 0.02 \\
\textbf{SVM on $\widehat{\mathcal{D}}$} & 0.04 & 0.005  \\
\textbf{Weighted CART on $\widetilde{\mathcal{D}}$} &  0.06  & 0.01 \\
\textbf{Unweighted CART on $\widetilde{\mathcal{D}}$} & 0.11  & 0.03 \\
\textbf{CART on $\widehat{\mathcal{D}}$} & 0.08 & 0.01  \\
\end{tabular}
\end{center}
\caption{\textit{Average over 50 runs of the prediction error on $\mathcal{D}_{test}$ and its standard deviation.}}
\end{table}

Overall, taking into accounts the inclusion probability allows to consider a training set of reduced size and therefore reduce the computationnal complexity of the learning procedure without damaging the quality of the prediction . Similar experiments on real datasets are displayed in the Supplementary Material for which similar conclusions hold.

\section{Conclusion}\label{sec:conclusion}
Most theoretical studies providing a statistical explanation for the success of learning algorithms based on the ERM paradigm fully ignore the possible impact of the sampling scheme producing the training data and stipulate that observations are independent replications of a generic r.v. or are uniformly sampled without replacement in a larger dataset. Through the  generalizable example of rejective sampling, this paper shows that such studies can be extended to situations where training data are obtained by more general sampling schemes and possibly exhibit a complex dependence structure, provided that related probablity weights are appropriately incorporated in the risk functional.
 
\section*{Appendix}

\subsection*{Proof of Theorem \ref{thm:neg}}
Considering the usual representation of the distribution of $(\epsilon_1,\; \ldots,\; \epsilon_N)$ as the conditional distribution of a sample of independent Bernoulli variables $(\epsilon^*_1,\; \ldots,\; \epsilon^*_N)$ conditioned upon the event $\sum_{i=1}^N\epsilon^*_i=n$ (see subsection \ref{subsec:survey}), the result is a consequence of Theorem 2.8 in \cite{JDP83}. %(see also \cite{Barbour}).

\subsection*{Bernstein inequality for sums of negatively associated random variables}
%\subsection*{Hoeffding inequality for sums of negatively associated random variables}
 
For simplicity, we first establish the following tail bound for negatively associated random variables, which extends the usual Bernstein inequality in the i.i.d. setting, see \cite{Bernstein}. Proofs of Proposition \ref{prop:rate1} and Theorem \ref{thm:General} are then deduced from Theorem \ref{thm:neg} and Theorem \ref{thm:BernNeg} (see Supplementary Material) .
%Hoeffding inequality in the i.i.d. setting, see \cite{Hoeffding63}.
  \begin{theorem}\label{thm:BernNeg}
  Let $Z_1,\; \ldots,\; Z_N$ be negatively associated real valued random variables such that $\vert Z_i\vert \leq c<+\infty$ a.s. $\mathbb{E}[Z_i]=0$ and $\mathbb{E}[Z_i^2 ]\leqslant\sigma_i^2$ for $1\leq i \leq N$. Then, for all $t>0$, we have: $\forall N\geq 1$,
  $$
  \mathbb{P}\left\{\sum_{i=1}^NZ_i  \geq t\right\}\leq \exp\left( -\frac{t^2}{\frac{2}{3}ct+2\sum_{i=1}^N\sigma^2_i} \right).
  $$
  \end{theorem}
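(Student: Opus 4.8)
The plan is to follow the classical Chernoff bounding strategy underlying all Bernstein-type inequalities, the only non-standard ingredient being a factorization of the moment generating function that takes the place of the independence used in the i.i.d. case. The whole proof then splits into a probabilistic step, where negative association is exploited, and a purely analytic step identical to the textbook computation.

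First I would establish that for every $\lambda>0$,
$$\mathbb{E}\left[\exp\left(\lambda\sum_{i=1}^N Z_i\right)\right]\leq \prod_{i=1}^N\mathbb{E}\left[e^{\lambda Z_i}\right].$$
This is the crux and the place where Definition \ref{def:negassoc} enters. Since for $\lambda>0$ the map $z\mapsto e^{\lambda z}$ is non-negative and increasing, one can argue by induction on $N$: writing $\exp(\lambda\sum_{i=1}^{N}Z_i)=\exp(\lambda\sum_{i=1}^{N-1}Z_i)\cdot e^{\lambda Z_N}$, the first factor is a non-negative, coordinatewise increasing function of $(Z_1,\ldots,Z_{N-1})$ and the second is a non-negative increasing function of $Z_N$, and these involve the disjoint index sets $\{1,\ldots,N-1\}$ and $\{N\}$. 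Definition \ref{def:negassoc} then yields $\mathrm{Cov}(\exp(\lambda\sum_{i=1}^{N-1}Z_i),\,e^{\lambda Z_N})\leq 0$, i.e. the expectation of the product is at most the product of the expectations; iterating the argument gives the claimed factorization.

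Second, with this factorization in hand the remainder is the standard i.i.d. Bernstein argument. I would bound each one-dimensional moment generating function using $\mathbb{E}[Z_i]=0$, the almost sure bound $|Z_i|\leq c$ and $\mathbb{E}[Z_i^2]\leq\sigma_i^2$: controlling the higher moments by $|\mathbb{E}[Z_i^k]|\leq c^{k-2}\sigma_i^2$ for $k\geq 2$ and summing the resulting series gives
$$\mathbb{E}[e^{\lambda Z_i}]\leq \exp\left(\frac{\sigma_i^2}{c^2}\left(e^{\lambda c}-1-\lambda c\right)\right).$$
Summing the exponents over $i$, setting $\Sigma^2:=\sum_{i=1}^N\sigma_i^2$ and inserting into the Chernoff bound $\mathbb{P}\{\sum_i Z_i\geq t\}\leq e^{-\lambda t}\,\mathbb{E}[\exp(\lambda\sum_i Z_i)]$ produces a Bennett-type bound. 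Choosing $\lambda=t/(\Sigma^2+ct/3)$ together with the elementary inequality $e^u-1-u\leq (u^2/2)/(1-u/3)$ valid for $0\leq u<3$ then yields exactly the announced denominator $\tfrac{2}{3}ct+2\Sigma^2$.

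I expect the moment generating function factorization to be the only genuine obstacle; everything downstream is the routine Bernstein calculation. The point requiring care is that the partial product $\exp(\lambda\sum_{i=1}^{k}Z_i)$ stays non-negative and coordinatewise increasing at each stage of the induction, so that Definition \ref{def:negassoc} applies with $f$ and $g$ both increasing. Positivity of $\lambda$ is essential here, which is precisely why the resulting bound is the one-sided deviation stated in the theorem.
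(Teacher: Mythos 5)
Your proposal is correct and follows essentially the same route as the paper's own proof: the Chernoff bound, followed by the factorization $\mathbb{E}[\exp(\lambda\sum_i Z_i)]\leq \prod_i\mathbb{E}[e^{\lambda Z_i}]$ obtained by peeling off one variable at a time via the negative association covariance inequality (the paper's ``descending recurrence''), and then the classical Bernstein/Bennett computation with the optimized choice of $\lambda$. The only difference is that the paper leaves the analytic step as a reference to the standard Bernstein argument, whereas you spell it out (correctly, including the constraint $\lambda<3/c$ implicit in your choice $\lambda=t/(\Sigma^2+ct/3)$).
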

% \begin{theorem}\label{thm:HoeffNeg}
 %Let $Z_1,\; \ldots,\; Z_N$ be negatively associated real valued random variables such that $a_i\leq Z_i\leq b_i$ a.s. and $\mathbb{E}[Z_i]=0$ for $1\leq i \leq N$. Then, for all $t>0$, we have: $\forall N\geq 1$,
 %$$
 %\mathbb{P}\left\{\sum_{i=1}^NZ_i  \geq t\right\}\leq \exp\left( -\frac{2t^2}{\sum_{i=1}^N(b_i-a_i)^2} \right).
 %$$
 %\end{theorem}
 
 Before detailing the proof, observe that a similar bound holds true for the tail probability $ \mathbb{P}\left(\sum_{i=1}^NZ_i  \leq - t\right)$ (and for $\mathbb{P}\left(\vert \sum_{i=1}^NZ_i \vert \geq  t\right)$ as well, up to a multiplicative factor $2$). Refer also to Theorem 4 in \cite{Janson} for a similar result in a more restrictive setting (\textit{i.e.} for tail bounds related to sums of negatively associated r.v.'s).
 \begin{proof}
 The proof starts off with the usual Chernoff method: for all $\lambda>0$,
 \begin{equation}\label{eq:Chernoff}
  \mathbb{P}\left\{\sum_{i=1}^NZ_i  \geq t\right\}\leq \exp\left( -t\lambda +\log \mathbb{E}\left[e^{t\sum_{i=1}^N Z_i} \right] \right).
 \end{equation}
 Next, observe that, for all $t>0$, we have 
 \begin{align*}\label{eq:neg2}
 \mathbb{E}\left[e^{t\sum_{i=1}^nZ_i}\right]&=\mathbb{E}\left[e^{tZ_n}e^{t\sum_{i=1}^{n-1}Z_i}\right]\\
 &\leq \mathbb{E}\left[e^{tZ_n} \right]\mathbb{E}\left[e^{t\sum_{i=1}^{n-1}Z_i}  \right]\\
 & \leq  \prod_{i=1}^n\mathbb{E}\left[ e^{tZ_i} \right],
 \end{align*}
 using the property \eqref{eq:neg} combined with a descending recurrence on $i$. The proof is finished by plugging \eqref{eq:neg2} into \eqref{eq:Chernoff}, using an adequate control of the log-Laplace transform of the $Z_i$'s and optimizing finally the resulting bound w.r.t. $\lambda>0$, just like in the proof of the classic Bernstein inequality, see \cite{Bernstein}.
 \end{proof}

\section*{Supplementary - Proof of Proposition \ref{prop:rate1} }

We start off by writing $S:=\sup_{g\in \mathcal{G}}\left\vert \bar{L}_{\boldsymbol{\epsilon}_N}(g)-\widehat{L}_N(g) \right\vert$ as $\sup_{g\in \mathcal{G}}\vert \frac{1}{N}\sum_{i=1}^{N} \left(\frac{\epsilon_{i}}{\pi_{i}}-1\right) \, \mathbb{I}\left\{
       g(X_{i}) \neq Y_{i} \right\} \vert $
and apply Theorem \ref{thm:neg} conditionnaly upon $\mathcal{D}_N$ to the r.v $Z_i :=\frac{1}{N}\left(\frac{\epsilon_{i}}{\pi_{i}}-1\right) \, \mathbb{I}\left\{g(X_{i}) \neq Y_{i} \right\}$ . Indeed, the $(\pi_{i})_{i=1}^N$ and $(\mathbb{I}\left\{g(X_{i}) \neq Y_{i} \right\})_{i=1}^N$ being positive real numbers, Theorem \ref{thm:neg} altogether with \cite{JDP83} implies that $(Z_i)_{i=1}^N$ are negatively associated. Since $\vert Z_i \vert \leqslant \frac{1}{N}\max(1,\frac{1}{\pi_i}-1)\leqslant\frac{1}{N \pi_i} \leqslant \frac{\kappa_{N}}{n} $ and $\mathbb{E}[Z_i^2]\leqslant \frac{1-pi_i}{N^2\pi_i}\leqslant\frac{\kappa_{N}}{nN}$ we have :
\begin{align*}
\mathbb{P}\left\{\sum_{i=1}^NZ_i  \geq t \vert \mathcal{D}_N \right\}\leq \exp\left( -\frac{nt^2}{\frac{2}{3}\kappa_N t+2\kappa_N}\right).
\end{align*}
Applying the same method to the r.v $(-Z_i)_{i=1}^N$ and taking the union bound yields :
\begin{align*}
\mathbb{P}\left\{ \vert \sum_{i=1}^N Z_i \vert  \geq t \vert \mathcal{D}_N \right\}\leq 2\exp\left( -\frac{nt^2}{\frac{2}{3}\kappa_N t+2\kappa_N}\right).
\end{align*}
By virtue of Sauer's lemma, since the class $\mathcal{G}$ has finite $VC$-dimension $V$, we have by taking expectation w.r.t $\mathcal{D}_N$:
\begin{align*}
\mathbb{P}\left\{ S   \geq t \right\}\leq 2(N+1)^V\exp\left( -\frac{nt^2}{\frac{2}{3}\kappa_N t+2\kappa_N}\right).
\end{align*}
The high probability bound is then easily deduced by choosing  $\delta=2(N+1)^V\exp\left( -\frac{nt^2}{\frac{2}{3}\kappa_N t+2\kappa_N}\right)$ so that :
\begin{align*}
\left(t-\frac{\log(\frac{2}{\delta})+V\log(N+1)}{3n}\kappa_N \right)&=\left( \frac{\log(\frac{2}{\delta})+V\log(N+1)}{3n}\kappa_N\right)^2\\
&+\frac{2\left(\log(\frac{2}{\delta})+V\log(N+1)\right)}{n}\kappa_N
\end{align*}
leading to the following upperbound :
\begin{align*}
t\leqslant \frac{2\kappa_N\left(\log(\frac{2}{\delta})+V\log(N+1)\right)}{3n}+\sqrt{2\frac{\log(\frac{2}{\delta})+V\log(N+1)}{n}\kappa_N}.
\end{align*}
The second claim of Proposition \ref{prop:rate1} is established using 
\begin{multline}
L(\bar{g}_N)-L^*\leq \inf_{g\in\mathcal{G}}L(g)-L^* +2
\sup_{g\in\mathcal{G}}\left\vert L(g)-\widehat{L}_N(g)\right\vert 
+2\sup_{g\in\mathcal{G}}\left\vert \widehat{L}_N(g)-\bar{L}_{\boldsymbol{\epsilon}_{N}%
}(g)\right\vert,
\end{multline}
altogether with classical results on ERM applied to the term $\sup_{g\in\mathcal{G}}\left\vert L(g)-\widehat{L}_N(g)\right\vert$ and a union bound.
\section*{Supplementary - Proof of Theorem\ref{thm:General} }

Starting from \eqref{eq:decomp2}, we only have to derive bounds for the quantities $S_1:=\sup_{g\in\mathcal{G}}\left\vert \bar{L}_{\boldsymbol{\epsilon}_{N}}(g)-\check{L}_{\boldsymbol{\epsilon}_{N}}(g)\right\vert$ and $S_2:=\sup_{g\in\mathcal{G}}\left\vert \check{L}_{\boldsymbol{\epsilon}_{N}}(g)-\bar{L}_{\boldsymbol{\epsilon}^{*}_{N}}(g)\right\vert$. Starting woth the first one, we have :
\begin{align*}
S_1&=\sup_{g\in\mathcal{G}}\vert  \frac{1}{N} \sum_{i=1}^N \epsilon_i \left( \frac{1}{\pi_i^*}-\frac{1}{\pi_i} \right)  \mathbb{I}\left\{g(X_{i}) \neq Y_{i} \right\}\vert\\
&\leqslant \frac{1}{N} \sum_{i=1}^N \epsilon_i \vert \frac{1}{\pi_i^*}-\frac{1}{\pi_i}  \vert
\end{align*}
so that taking expectation w.r.t $\epsilon_N$ conditionned upon $\mathcal{D}_N$ yields :
\begin{align*}
\mathbb{E}[S_1\vert \mathcal{D}_N]&\leqslant \frac{1}{N} \sum_{i=1}^N  \vert \frac{\pi_i^*-\pi_i}{\pi_i} \vert \\
&\leqslant \kappa_N \frac{N}{n}\frac{1}{N}\sum_{i=1}^N \vert \pi_i -\pi_i^* \vert\\
&\leqslant \kappa_N \frac{N}{n} d_{TV}(R_{N},\; R^*_{N}),
\end{align*}
taking  expectation w.r.t $\mathcal{D_N}$ gives an upperbound on $S_1$. We now turn to the analysis of $S_2$  which is very similar :
\begin{align*}
S_2&=\sup_{g\in\mathcal{G}}\vert  \frac{1}{N} \sum_{i=1}^N \frac{\epsilon_i^* -\epsilon_i}{\pi_i^*} \mathbb{I}\left\{g(X_{i}) \neq Y_{i} \right\}\vert\\
&\leqslant \frac{1}{N} \sum_{i=1}^N  \frac{ \vert \epsilon_i^* -\epsilon_i \vert}{\pi_i^*}.
\end{align*}
We then take expectation conditionned upon $\mathcal{D}_N$ and easily obtain
\begin{align*}
\mathbb{E}[S_2\vert \mathcal{D}_N] \leqslant \kappa_N^* \frac{N}{n} d_{TV}(R_{N},\; R^*_{N})
\end{align*}
which conclude the proof.
\section*{Supplementary - On biased HT risk minimization}
Eq. \eqref{eq:bias} directly results from the following lemma.

\begin{lemma}\label{lem:bias}
Suppose that $d_{N}>4$. We have, for all $i\in\{1,\; \ldots,\; N  \}$, 
\[
\left\vert1/\pi_{i}-1/p_{i}\right\vert \leq\frac{4}{d_{N}}\times(1-\pi_{i})/\pi_{i}.
\]
\end{lemma}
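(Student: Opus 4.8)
The plan is to reduce the stated inequality to a one-sided estimate on the gap $p_i-\pi_i$ and then feed into it an explicit (non-asymptotic) form of H\'ajek's relations \eqref{eq:rel1}--\eqref{eq:rel2}. First I would clear denominators: since $1/\pi_i-1/p_i=(p_i-\pi_i)/(\pi_i p_i)$, multiplying the target bound through by $\pi_i p_i$ shows that the claim $|1/\pi_i-1/p_i|\le (6/d_N)(1-\pi_i)/\pi_i$ is \emph{equivalent} to
\begin{equation}\label{eq:reduced}
|p_i-\pi_i|\le \frac{6}{d_N}\,p_i(1-\pi_i).
\end{equation}
Thus it suffices to prove \eqref{eq:reduced} for every $i\in\{1,\ldots,N\}$.

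Next I would invoke a quantified version of \eqref{eq:rel2}, namely $p_i(1-\pi_i)=\pi_i(1-p_i)\bigl(1-(\tilde{p}-p_i)/d_N+R_i\bigr)$ with an explicit remainder obeying $|R_i|\le c/d_N$ for a universal constant $c$ as soon as $d_N\ge 1$ (this is the ``refinement'' alluded to before the lemma, obtained by making the $o(1/d_N)$ term of H\'ajek's Theorem~5.1 uniform). Expanding both sides and cancelling the term $\pi_i p_i$ common to left and right yields the identity $p_i-\pi_i=-\pi_i(1-p_i)\bigl[(\tilde{p}-p_i)/d_N-R_i\bigr]$. The key elementary observation here is that $\tilde{p}=\sum_j p_j^2(1-p_j)\big/\sum_j p_j(1-p_j)$ is a convex combination of the $p_j\in(0,1)$ with nonnegative weights $p_j(1-p_j)$, so $\tilde{p}\in(0,1)$ and hence $|\tilde{p}-p_i|\le 1$. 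Taking absolute values therefore gives
\begin{equation}\label{eq:step2}
|p_i-\pi_i|\le \pi_i(1-p_i)\,\frac{1+c}{d_N}.
\end{equation}

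The last step is to trade $\pi_i(1-p_i)$ for $p_i(1-\pi_i)$ so as to match the right-hand side of \eqref{eq:reduced}. Applying the companion relation \eqref{eq:rel1} in the same quantified form, the ratio $\pi_i(1-p_i)\big/\bigl(p_i(1-\pi_i)\bigr)$ equals $1+O(1/d_N)$ and is thus bounded above by a universal constant $\gamma$ whenever $d_N\ge 1$; substituting $\pi_i(1-p_i)\le\gamma\,p_i(1-\pi_i)$ into \eqref{eq:step2} gives $|p_i-\pi_i|\le\gamma(1+c)\,p_i(1-\pi_i)/d_N$, and one checks that the constants can be arranged so that $\gamma(1+c)\le 6$, which is precisely \eqref{eq:reduced}.

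The main obstacle is not the algebra above but the quantitative input it relies on: turning H\'ajek's expansions \eqref{eq:rel1}--\eqref{eq:rel2}, stated only up to an unspecified $o(1/d_N)$, into bounds with explicit constants valid for all $N$ under the sole hypothesis $d_N\ge 1$ (which in turn forces one to compare $d_N$ and $d^*_N$). Concretely this amounts to controlling the ratio $\mathbb{P}\{\sum_{j\ne i}\epsilon_j^*=n-1\}\big/\mathbb{P}\{\sum_j\epsilon_j^*=n\}$ of Poisson-sum probabilities uniformly in $i$, e.g.\ through a local-limit/Edgeworth estimate or by rendering H\'ajek's original argument effective. Once the remainder $R_i$ and the conversion factor $\gamma$ are pinned to explicit values, verifying the final numerical constant $6$ is routine.
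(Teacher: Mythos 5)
Your reduction to the inequality $|p_i-\pi_i|\le (6/d_N)\,p_i(1-\pi_i)$ is correct, and the algebra that follows (the identity $p_i-\pi_i=-\pi_i(1-p_i)\bigl[(\tilde{p}-p_i)/d_N-R_i\bigr]$, the bound $|\tilde{p}-p_i|\le 1$ via convexity) is sound. But the proof has a genuine gap, and you name it yourself: the entire content of the lemma is the non-asymptotic estimate, and your argument takes that content as an unproven hypothesis. The ``quantified version'' of \eqref{eq:rel2} with a uniform remainder $|R_i|\le c/d_N$ valid whenever $d_N\ge 1$, and the companion bound $\pi_i(1-p_i)\le\gamma\,p_i(1-\pi_i)$, are exactly what must be established; H\'ajek's Theorem 5.1 as cited gives only an $o(1/d_N)$ statement along sequences of designs with $d_N\to\infty$, so there is nothing to ``make uniform'' without a new argument. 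Consequently the final numerical claim $\gamma(1+c)\le 6$ cannot be ``checked'' until those constants exist: as written, the constant $6$ in your conclusion is unsupported, and deferring this to a local-limit or Edgeworth estimate leaves the proof incomplete.

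The paper closes precisely this gap by \emph{not} using the asymptotic expansions \eqref{eq:rel1}--\eqref{eq:rel2} at all. It starts instead from H\'ajek's exact representation (5.14): the ratio $\frac{\pi_i}{p_i}\frac{1-p_i}{1-\pi_i}$ equals a $P_N$-average, over samples $s$ with $i\notin s$, of $\sum_{h\in s}(1-p_h)\big/\bigl(\sum_{j\in s}(1-p_j)+(p_h-p_i)\bigr)$. Applying the elementary inequality $1-x\le 1/(1+x)\le 1-x+x^2$ together with the deterministic lower bound $\sum_{j\in s}(1-p_j)\ge d_N/2$ (H\'ajek's Lemma 2.2) gives the explicit two-sided bound $1-2/d_N\le \frac{\pi_i(1-p_i)}{p_i(1-\pi_i)}\le 1+2/d_N+4/d_N^2$, from which $\bigl|1/p_i-1/\pi_i\bigr|\le \bigl((1-\pi_i)/\pi_i\bigr)\bigl(2/d_N+4/d_N^2\bigr)\le (6/d_N)(1-\pi_i)/\pi_i$ follows once $d_N\ge 1$; this is where the constant $6$ actually comes from ($2+4=6$). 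In other words, the route you mention only in passing in your last paragraph (``rendering H\'ajek's original argument effective'') \emph{is} the proof; carrying out that step is what your write-up is missing, and once it is done the intermediate expansion involving $\tilde{p}$ and $R_i$ becomes unnecessary.
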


\begin{proof}
The proof follows from the representation (5.14) on p1509 in \cite{Hajek64}. Denote by $P_N$ a Poisson sampling distribution on $\mathcal{I}_N$ with inclusion probabilities $p_1,\; \ldots,\; p_N$, the canonical parameters of $R_N$. For all $i\in \{1,\; \ldots,\; N\}$, we have:
\begin{align*}
&\frac{\pi_{i}}{p_{i}}\frac{1-p_{i}}{1-\pi_{i}}   =\left(  \sum_{s\in \mathcal{P}(\mathcal{I}_N):\; i\in
\mathcal{I}_N\setminus\{s \}}P(s)\right)  ^{-1}\\
&\times\sum_{s\in \mathcal{P}(\mathcal{I}_N):\;  i\in \mathcal{I}_N\setminus\{s \}}P(s)\sum_{h\in s}\frac{1-p_{h}}%
{\sum_{j\in s}(1-p_{j})+(p_{h}-p_{i})}\\
& =\left(  \sum_{s:\ i\in \mathcal{I}_N\setminus\{s \}}P_N(s)\right)  ^{-1}\\
&\times\sum_{s:\ i\in \mathcal{I}_N\setminus\{s \}}%
P_N(s)\sum_{h\in s}\frac{1-p_{h}}{\sum_{j\in s}(1-p_{j})\left(  1+\frac
{(p_{h}-p_{i})}{\sum_{j\in s}(1-p_{j})}\right)  }%
\end{align*}
Now recall that for any $x\in]-1/2,1[ $, we have:
\[
1-x\leq\frac{1}{1+x}\leq1-x+2x^{2}.
\]
It follows that
\begin{align*}
&\frac{\pi_{i}}{p_{i}}\frac{1-p_{i}}{1-\pi_{i}}   \leq1-\left(  \sum_{s:\ i\in
\mathcal{I}_N\setminus\{s \}}P(s)\right)  ^{-1}\\
&\times\sum_{s:\ i\in \mathcal{I}_N\setminus\{s \}}P(s)\sum_{h\in s}\frac{(1-p_{h}%
)(p_{h}-p_{i})}{\left(  \sum_{j\in s}(1-p_{j})\right)  ^{2}}\\
& +2\left(  \sum_{s:\ i\in \mathcal{I}_N\setminus\{s \}}P(s)\right)  ^{-1}\sum_{s:\ i\in \mathcal{I}_N\setminus\{s \}}%
P(s)\sum_{h\in s}\frac{(1-p_{h})(p_{h}-p_{i})^{2}}{\left(  \sum_{j\in
s}(1-p_{j})\right)  ^{3}}%
\end{align*}
Following now line by line the proof on p. 1510 in \cite{Hajek64} and noticing that $\sum_{j\in s}%
(1-p_{j})\geq d_N/2>2$ (see Lemma 2.2 in \cite{Hajek64}), we have, as soon as $d_N>4$,
\begin{align*}
\left\vert \sum_{h\in s}\frac{(1-p_{h})(p_{h}-p_{i})}{\left(  \sum_{j\in
s}(1-p_{j})\right)  ^{2}}\right\vert  & \leq\frac{1}{\left(  \sum_{j\in
s}(1-p_{j})\right)  } \leq\frac{2}{d_{N}}
\end{align*}
and similarly%
\begin{align*}
\sum_{h\in s}\frac{(1-p_{h})(p_{h}-p_{i})^{2}}{\left(  \sum_{j\in s}%
(1-p_{j})\right)  ^{3}}  & \leq\frac{1}{\left(  \sum_{j\in s}(1-p_{j})\right)
^{2}} \leq\frac{4}{d_{N}^{2}}.
\end{align*}
This yieds: $\forall i\in\{1,\; \ldots,\; N  \}$,
\[
1-\frac{2}{d_{N}}\leq\frac{\pi_{i}}{p_{i}}\frac{1-p_{i}}{1-\pi_{i}}\leq
1+\frac{2}{d_{N}}+\frac{8}{d_{N}^{2}}%
\]
and
\[
p_{i}(1-\pi_{i})(1-\frac{2}{d_{N}})\leq\pi_{i}(1-p_{i})\leq p_{i}(1-\pi
_{i})(1+\frac{2}{d_{N}}+\frac{8}{d_{N}^{2}}),
\]
leading then to
\[
-\frac{2}{d_{N}}(1-\pi_{i})p_{i}\leq\pi_{i}-p_{i}\leq p_{i}(1-\pi_{i}%
)(\frac{2}{d_{N}}+\frac{8}{d_{N}^{2}})
\]
and finally to
\[
-\frac{(1-\pi_{i})}{\pi_{i}}\frac{2}{d_{N}}\leq\frac{1}{p_{i}}-\frac{1}%
{\pi_{i}}\leq\frac{(1-\pi_{i})}{\pi_{i}}(\frac{2}{d_{N}}+\frac{8}{d_{N}^{2}}).
\]
Since $1/d_{N}^{2}\leq 1/d_{N}$ as soon as $d_{N}\geq 1$, the lemma is proved. 
\end{proof}

\section*{Supplementary - Further details on the rejective scheme}

Let $n\leq N$ and consider a vector $\boldsymbol{\pi}= (\pi_{1},\,\ldots,\,\pi_{N})$ of first order inclusion probabilities. Further define $\mathcal{S}_n := \{s \in \mathcal{P}(\mathcal{I}_N) : \#s = n\}$, the set of all samples in population $\mathcal{I}_N$ with cardinality $n$. The rejective sampling \cite{Hajek64, Ber98}, sometimes called conditional Poisson sampling, exponential design without replacement or maximum entropy design, is the sampling design $R_{N}$ that selects samples of fixed size $n(s) = n$ so as to maximize the entropy measure
$H(R_{N})= - \sum_{s \in \mathcal{S}_n}R_{N}(s)\, \log R_{N}(s)$,
subject to the constraint that its vector of first order inclusion probabilities coincides with $\boldsymbol{\pi}$. It is easily implemented in two steps:

\begin{enumerate}
\item Draw a sample $S$ according to a Poisson plan $P_{N}$, with properly chosen first order inclusion probabilities $\mathbf{p}_N= (p_{1},\; \ldots,\; p_{N})$. The representation is called canonical if $\sum_{i = 1}^N p_{i} = n$. In that case, relationships between each $p_{i}$ and $\pi_{i}$, $1\leq i \leq N$, are established in \cite{Hajek64}.

\item If $n(S)\neq n$, then reject sample $S$ and go back to step one, otherwise stop.
\end{enumerate}
Vector $\mathbf{p}$ must be chosen in a way that the resulting first order inclusion probabilities coincide with $\boldsymbol{\pi}$, by means of a dedicated optimization algorithm \cite{tille2006sampling}. The corresponding probability distribution is given for all $s \in \mathcal{P}(\mathcal{I}_{N})$ by
$R_{N}(s) = \frac{P_{N}(s)\,\mathbb{I}
\{\#s = n\}  }{\sum_{s^\prime \in \mathcal{S}_n}P_{N}(s^{\prime})} \ \propto \ \prod_{i\in s}p_{i}\prod_{i\notin s}(1-p_{i}%
)\times\mathbb{I}\{  \#s=n\}$,
where $\propto$ denotes the proportionality.

\section*{Supplementary - Stratified sampling}
A stratified sampling design permits to draw a sample $S$ of fixed size $n(S)=n\leq N$ within a population $\mathcal{I}_{N}$ that can be partitioned into $K\geq1$ distinct strata $\mathcal{I}_{N_{1}},\dots,\mathcal{I}_{N_{K}}$ (known a priori) of respective sizes $N_{1},\,\ldots,\,N_{K}$ adding up to $N$. Let $n_{1},\dots,n_{K}$ be non-negative integers such that $n_1 + \dots + n_K = n$, then the drawing procedure is implemented in $K$ steps: within each
stratum $\mathcal{I}_{N_{k}}$, $k\in\{1,\ldots,K\}$, perform a SWOR of size $n_{k}\leq N_{k}$ yielding a sample $S_{k}$. The final sample is obtained by assembling these sub-samples: $S=\bigcup_{k=1}^{K}S_{k}$. The probability of drawing a specific sample $s$ by means of this survey design is
$R_{N}^{\text{str}}(s)=\sum_{k=1}^{K} \dbinom{N_k}{n_k}^{-1}$.
Naturally, first and second order inclusion probabilities depend on the
stratum to which each unit belong: for all $i \neq j$ in $\mathcal{U}_N$,
$\pi_{i}(R^{\text{str}}_N) = \sum_{k=1}^{K}\frac{n_{k}}{N_{k}}\,\mathbb{I}\{
i\in\mathcal{U}_{N_{k}}\}$
 and $ \pi_{i,j}(R^{\text{str}}_N) = \sum_{k=1}^{K} \frac{n_{k}(n_{k}-1)}{N_{k}(N_{k}-1)}\, \mathbb{I}\{ (i,j) \in \mathcal{U}_{N_{k}}^{2}\}$.

\section*{Supplementary - Rao-Sampford sampling}
The Rao-Sampford sampling design generates samples $s \in \mathcal{P}(\mathcal{I}_{N})$ of fixed size $n(s)=n$ with respect to some given first order inclusion probabilities $\boldsymbol{\pi}^{RS} := (\pi_{1}^{RS},\ldots,\pi_{N}^{RS})$, fulfilling the condition $\sum_{i=1}^{N} \pi_{i}^{RS} = n$, with probability 
\begin{equation*}
R_{N}^{RS}(s)=\eta\sum_{i\in s}\pi_{i}^{RS}\prod
_{j\notin s}\frac{\pi_{j}^{RS}}{1-\pi_{j}^{RS}}.
\end{equation*}
Here, $\eta > 0$ is chosen such that $\sum_{s \in \mathcal{P}(\mathcal{I}_{N})}R_{N}^{RS}(s)=1$. In practice, the following algorithm is often used to implement such a design \cite{Ber98}:

\begin{enumerate}
\item select the first unit $i$ with probability $\pi_{i}^{RS} / n$,
\item select the remaining $n - 1$ units $j$ with drawing probabilities
proportional to $\pi_{j}^{RS}/(1 - \pi_{j}^{RS})$, $j = 1,\,\ldots,\, N$,
\item accept the sample if the units drawn are all distinct, otherwise reject
it and go back to step one.
\end{enumerate}

%\section*{Supplementary - Hoeffding inequality for basic sampling without replacement}
%\begin{theorem}(see \cite{Hoeffding63}) Let $Z_1,\; \ldots,\; Z_N$ be $N\geq 1$ real-valued random variables. Conditioned upon the $Z_i$'s, consider  $\boldsymbol{\epsilon}_N=(\epsilon_1,\; \ldots,\; \epsilon_N)$ a vector of indicator variables related to a SWOR scheme on $\mathcal{I}_N$ of size $n\in \{1,\; \ldots,\; N  \}$. 
%\begin{itemize}
%\item[(i)] Then, for any $t>0$, we almost-surely have: $\forall n\leq N$,
%\begin{equation}
%\mathbb{P}\left\{ \left\vert  \frac{1}{N}\sum_{i=1}^N\frac{\epsilon_i}{n/N}Z_i- \frac{1}{N}\sum_{i=1}^NZ_i \right\vert \geq t\mid Z_1,\; \ldots,\; Z_N \right\}\leq 2 e^{-2nt^2/c^2},
%\end{equation}
%where $c=\max_{1\leq i<j\leq N}\vert Z_i-Z_j\vert$
%\item[(ii)]
%\end{itemize}
%\end{theorem}
%More inequalities of this type can be found in \cite{Serfling74}.
%\pagebreak
\section*{Additional experiments}

We consider the following datasets which were obtained via a stratified sampling design. We point out that this sampling scheme involves \textit{negatively associated} (binary) random variables so that the theoretical results obtained in the rejective case extend to this scheme.

\begin{table}[h]
\label{tab:datasets}
\begin{center}
\begin{tabular}{lcc}
&\textbf{N} & \textbf{Number of features} \\
\hline \\
\textbf{incaIndiv} & 4079  & 326 \\
\textbf{GJB} & 2001  & 130 \\
\textbf{privacy3} & 316  & 95 \\
\textbf{privacy4} & 301  &124 \\
\end{tabular}
\end{center}
\end{table}

The dataset \textit{incaIndiv} \footnote{\url{https://https://www.data.gouv.fr/fr/datasets/}} contain informations on the food consumption of the french population. The dataset \textit{GJB}\footnote{\url{http://www.pewinternet.org/datasets/june-10-july-12-2015-gaming-jobs-and-broadband/}} contains questions about job seeking and the internet, workforce automation, online dating and smartphone use among Americans. The datasets \textit{privacy3}\footnote{\url{http://www.pewinternet.org/datasets/nov-26-2014-jan-3-2015-privacy-panel-3/}} and \textit{privacy4}\footnote{\url{http://www.pewinternet.org/datasets/jan-27-feb-16-2015-privacy-panel-4/}} contain questions about privacy and information sharing. On the datasets \textit{incaIndiv} and \textit{incaCompl} we try to predict whether or not someone is an adult, on the dataset \textit{GJB} we will try to learn to predict the gender, and on the datasets \textit{privacy3} and \textit{privacy4} we will predict an answer to some questions among 5 possibilities.

We perform our experiments by randomly splitting the datasets \textit{incaIndiv}, \textit{incaCompl},\textit{GJB} into a training set (roughly $70$ percent of the initial dataset) and a test set. The size of \textit{privacy3} and \textit{privacy4} being much smaller we perform 10-fold cross-validation.  

\begin{table}[h]\label{NumericalResults}
\label{tab:datasets}
\begin{center}
\begin{tabular}{lccccc}
 & \textbf{incaIndiv}  & \textbf{GJB} & \textbf{privacy3} & \textbf{privacy4}\\
\hline \\
\textbf{Weighted SVM} & 0.16 & 0.36 & 0.46 & 0.48 \\
\textbf{Unweighted SVM} & 0.19  & 0.43 & 0.50 & 0.52\\
\textbf{Weighted CART} & 0.04   & 0.41 & 0.49  & 0.54 \\
\textbf{Unweighted CART} & 0.05  & 0.43 & 0.52 & 0.57\\
\end{tabular}
\end{center}
\caption{\textit{Average over 50 runs of the prediction error}}
\end{table}

\bibliographystyle{plain}
\bibliography{ERM_sampling}

\end{document}